\documentclass{birkjour}
\usepackage[utf8]{inputenc}
\usepackage{amsmath}
\usepackage{amsthm}
\usepackage{amsfonts}
\usepackage{amssymb,multicol}
\usepackage{graphicx}
\usepackage{ytableau}

\newtheorem{theorem}{Theorem}
\numberwithin{theorem}{section}
\newtheorem{lemma}[theorem]{Lemma}

\newtheorem{example}{Example}
\numberwithin{example}{section}
\title[False theta function]{A bijective proof of a false theta function identity from Ramanujan’s lost notebook}
\author{Hannah Burson}
\address{% 
Department of Mathematics, University of Illinois \\
1409 W. Green St.\\
Urbana, IL 61801}
\email{hburso2@illinois.edu} 

\begin{document}

\begin{abstract}
In his lost notebook, Ramanujan listed 5 identities related to the false theta function $$f(q)=\sum_{n=0}^\infty (-1)^nq^{n(n+1)/2}.$$ 
A new combinatorial interpretation and proof of one of these identities is given. The methods of the proof allow for new multivariate generalizations of this identity. Additionally, the same technique can be used to obtained a combinatorial interpretation of another one of the identities. 
\end{abstract}

\subjclass{Primary 05A17; Secondary 05A19}
\keywords{Partitions, overpartitions, false theta functions}

\maketitle
\section{Introduction}
L. J Rogers \cite{Rogers1917} introduced false theta functions, which are series that would be classical theta functions except for changes in signs of an infinite number of terms. In Ramanujan's notebooks \cite{RamanujanNB} and lost notebook \cite{RamanujanLNB}, he recorded many false theta function identities that he discovered.  However, in Ramanujan's last letter to Hardy in 1920, Ramanujan introduced mock theta functions and shifted his focus away from false theta functions.  The mathematical community followed Ramanujan's lead and largely ignored false theta functions for the next several decades.  

In recent times, there has been an increase in interest in false theta functions. G.E. Andrews devoted a section of \cite{Andrews1979} to partition theoretic applications of false theta functions. More recently, such as in \cite{Alladi2009}, \cite{Alladi2010}, \cite{Berndt2010}, \cite{Berndt2003}, and \cite{Kim2010}, researchers have found combinatorial proofs of some of Ramanujan's false theta function identities.  

In his lost notebook \cite{RamanujanLNB} (c.f. \cite[p.~227]{Andrews2005}), Ramanujan stated five identities related to the false theta function
\begin{equation}\label{fq}
f(q)=\sum_{n=0}^\infty (-1)^nq^{n(n+1)/2}, \quad |q|<1.
\end{equation} 

These identities were first proved by Andrews in \cite{Andrews1981}, using identities such as the Rogers-Fine identity and Heine's transformation. Other analytic proofs have been given in \cite{Andrews2007}, \cite{Chu2010}, and \cite{Wang2018}. There are no previously known bijective proofs of any of these identities.  

In this paper, we provide a combinatorial proof of one of Ramanujan's identities for false theta functions. We adopt the standard $q$-series notation  \begin{align*}
(a;q)_n=\prod_{j=0}^{n-1}(1-aq^j), \quad |q|<1, n\in\{0,1,2\ldots\}.
\end{align*}
\begin{theorem}[Ramanujan]\label{fq4} 
If $f(q)$ is defined by (\ref{fq}), then for $|q|<1$,
$$\sum_{n=0}^\infty \frac{(q;q^2)_n\, q^n}{(-q;q^2)_{n+1}}=f(q^4).$$
\end{theorem}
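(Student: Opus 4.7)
The plan is to give the left-hand side a combinatorial interpretation as a signed enumeration of certain partition-like objects and then to construct a sign-reversing, weight-preserving involution whose surviving fixed points account precisely for the exponents $2k(k+1)$ with signs $(-1)^k$ appearing in $f(q^4)$.

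First I would expand the $n$-th summand factor by factor. Writing $(q;q^2)_n=\prod_{k=1}^n(1-q^{2k-1})$ as a signed sum over subsets yields the generating function for partitions $\alpha$ into distinct odd parts $\le 2n-1$, carrying the sign $(-1)^{\ell(\alpha)}$. Geometrically expanding $1/(-q;q^2)_{n+1}=\prod_{k=0}^n(1+q^{2k+1})^{-1}$ yields the generating function for partitions $\beta$ into odd parts $\le 2n+1$ (with repetition allowed), carrying the sign $(-1)^{\ell(\beta)}$. The extra $q^n$ I would absorb as $n$ mandatory copies of the part $1$ appended to $\beta$, which is consistent with $1$ already being an allowed part of $\beta$. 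The $n$-th summand is then the signed generating function for pairs $(\alpha,\beta)$ of partitions into odd parts subject to these bounds, with $\beta$ required to have at least $n$ parts equal to $1$, and with sign $(-1)^{\ell(\alpha)+\ell(\beta)}$.

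Second, I would unify the sum over $n$ into a single set $\mathcal{S}$ of signed, weighted combinatorial objects by letting the parameter $n$ be recovered as a statistic of each object (for instance, from the bound on the largest odd part of $\beta$, or from a distinguished marked part). The weighted enumeration over $\mathcal{S}$ then equals the full left-hand side.

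Third, I would define a sign-reversing involution $\phi\colon\mathcal{S}\to\mathcal{S}$. The natural candidate compares the smallest part of $\alpha$ with the smallest ``free'' part of $\beta$ (that is, a part of $\beta$ not among the $n$ forced $1$'s contributed by $q^n$), and transfers the smaller of the two between the distinct component and the repeatable component. Any such transfer changes $\ell(\alpha)+\ell(\beta)$ by exactly one and therefore flips the sign while preserving both the weight and the bound constraints. The objects on which no such move is legal should be rigidly constrained staircases of odd parts assembling into partitions of $2k(k+1)=4(1+2+\cdots+k)$, one for each $k\ge 0$, contributing the sign $(-1)^k$, and thereby reproducing $f(q^4)$.

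The main obstacle I anticipate is specifying $\phi$ precisely enough that it is a genuine involution: well-defined everywhere on $\mathcal{S}$, self-inverse, and with the prescribed fixed-point set. The delicate cases are the boundary moves---when the smallest movable part sits at the cutoff $2n\pm1$, or when a candidate move would collide with one of the $n$ forced $1$'s coming from the $q^n$ factor---because these are exactly the places where the recovery of $n$ as a statistic and the sign-flip condition interact. Getting these interactions to dovetail, and verifying that the only surviving configurations are the staircases of weight $2k(k+1)$, is where I expect the real combinatorial content of the proof to reside.
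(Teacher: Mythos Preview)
Your combinatorial setup is essentially the same as the paper's: writing the distinct odd parts from $(q;q^2)_n$ as overlined parts and the repeatable odd parts from $1/(-q;q^2)_{n+1}$ as non-overlined parts, one arrives exactly at the overpartition pairs $(n,\overline{\pi})$ of Theorem~\ref{interpretation}. The paper, however, does \emph{not} absorb $q^n$ as $n$ forced copies of $1$; it records $n$ as a separate integer (the top row of a ``boxed $2$-modular diagram''). Your absorption is already problematic on the level of signs---adding $n$ unsigned $1$'s to $\beta$ means parts equal to $1$ no longer all carry the same sign contribution---and your fallback of ``recovering $n$ as a statistic'' does not work, since the same pair $(\alpha,\beta)$ satisfies the bounds for infinitely many $n$.

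The more serious gap is the involution itself. A ``transfer the smallest part between $\alpha$ and $\beta$'' move keeps $n$ fixed, and for fixed $n$ the signed sum $\dfrac{(q;q^2)_n\,q^n}{(-q;q^2)_{n+1}}$ is not a single monomial, so such a map cannot leave only one fixed point per $n$; in particular the obvious fixed point $\alpha=\beta=\emptyset$ already contributes $\sum_{n\ge 0}q^n$, which is not $f(q^4)$. The key idea you are missing---and the heart of the paper's proof---is that the involution must \emph{change} $n$: conjugation of the boxed $2$-modular diagram swaps $n$ with $\nu(\overline{\pi})$, and this is what makes the map sign-reversing when $n\not\equiv\nu(\overline{\pi})\pmod 2$. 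Even then a single map does not suffice; for $n\equiv\nu(\overline{\pi})\pmod 2$ the paper needs two further variants ($\phi_s$, $\phi_r$: conjugate after freezing the last row or last column) and a case analysis comparing $s(\overline{\pi})$ with $2\nu_\ell+1$ before the surviving objects are pinned down as the rectangles $(k,(2k+1)^k)$ of weight $2k(k+1)$. Your proposal does not contain any mechanism that plays the role of this $n\leftrightarrow\nu(\overline{\pi})$ swap.
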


In this paper, we will focus on a bijective proof of Theorem \ref{fq4}. In Section 2, we explain the necessary background on partitions.  Then, in Section 3, we introduce a new combinatorial analogue of Theorem \ref{fq4} and give its bijective proof in Section 4. In Section 5, we introduce new identities that arise from generalizing the proof in Section 4. Finally, in Section 6, we give a similar combinatorial interpretation of another one of Ramanujan's identities.

\section{Background}
We use several tools from the theory of partitions. Recall that a partition of $n$ is a non-increasing sequence of integers $(\pi_1,\pi_2,\ldots,\pi_k)$ where $\pi_1+\pi_2+\cdots+\pi_k=n$. An overpartition of $n$ is a partition of $n$ where the first appearance of a part of any size may be overlined. For example, $(7,6,\overline{5},5,5,\overline{3},2,2,2)$ is an overpartition of $37$. We can create a graphical representation of a partition, called a Ferrers diagram, by making an array of boxes whose $i$th row has as many boxes as the $i$th part of the partition.  There is a variation of a Ferrers diagram called an $m$-modular diagram (also called a MacMahon diagram) where the part $mj+r$ with $0\le r<m$ is represented by a row made of $j$ boxes containing an $m$ following one box containing a $r$.  

For this paper, we create an analogue of a $2$-modular diagram called a boxed $2$-modular diagram, which is a graphical representation of a pair $(k,\pi)$ where $k$ is a non-negative integer and $\pi$ is a partition. To obtain the boxed $2$-modular diagram, we represent $\pi$ as a $2$-modular diagram and $k$ as a row of  one 0 and $k$ 1s at the top of the diagram.  For example, the figure below is a boxed $2$-modular diagram for the pair $(3,(8,7,5,5,3)).$
\begin{center}
\ytableausetup{boxsize = 1.2em}
\begin{ytableau}
0 & 1 & 1 & 1 \\
2 & 2 & 2 & 2 \\
1 & 2 & 2 & 2  \\
1 & 2 & 2   \\
1 & 2 & 2\\
1 & 2  \\
\end{ytableau}\\
\end{center}

Note that, if $\pi$ is a partition into odd parts with the largest part no greater than $2k+1$, the boxed $2$-modular diagram will have the shape of a partition and the boxes in the first column will not contain any 2s. 

To represent an overpartition, we shade the last box of any overlined part.  For example, the figure below is a boxed $2$-modular diagram for the pair $(3,(8,\overline{7}, \overline{5},5,3)).$

\begin{center}
\ytableausetup{boxsize = 1.2em}
\begin{ytableau}
0& 1 & 1 & 1 \\
2 & 2 & 2 & 2 \\
1 & 2 & 2 & *(gray)2  \\
1 & 2 & *(gray) 2   \\
1 & 2 & 2\\
1 & 2  \\
\end{ytableau}\\
\end{center}

We use the following notation when discussing pairs $(k,\overline{\pi})$.
\begin{itemize}
\item $\mathcal{P}_n$ is the set of pairs $(k,\overline{\pi})$ where $k$ is a non-negative integer and $\overline{\pi}$ is an overpartition of $n-k$ into odd parts of size no greater than $2k+1$ and with all overlined parts of size no greater than $2k-1$.
\item $\nu(\overline{\pi})$ is the number of parts of the overpartition $\overline{\pi}$.
\item $s(\overline{\pi})$ is the size of the smallest part of $\overline{\pi}$.
\item $\nu_\ell(k,\overline{\pi})$ is the number of parts of size $2k+1$ in $\overline{\pi}$.
\item $\nu_s(\overline{\pi})$ is the number of times the smallest part appears in $\overline{\pi}$.
\end{itemize}

\section{Combinatorial Intepretation}
In this section, we interpret Theorem \ref{fq4} in terms of pairs $(k,\overline{\pi})\in \mathcal{P}_n.$  We count each pair $(k,\overline{\pi})\in \mathcal{P}_n$ with weight $(-1)^{\nu(\overline{\pi})}$.

\begin{theorem}\label{interpretation}
Let $\overline{p}_o(n)$ (resp. $\overline{p}_e(n)$) be the number of pairs $(k,\overline{\pi})$, where $k$ is a non-negative integer and $\overline{\pi}$ is an overpartition of $n-k$ into an odd number (resp. even number) of odd parts of size not exceeding $2k+1$, where all overlined parts must have size $<2k+1$. Then, for $n\ge 0$, 
$$\overline{p}_e(n)-\overline{p}_o(n)=\begin{cases} (-1)^k, & \text{if } n=2k(k+1),\\
0, & \text{otherwise.}\end{cases}$$
\end{theorem}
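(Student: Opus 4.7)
The plan is to derive Theorem~\ref{interpretation} as an immediate corollary of Theorem~\ref{fq4}, by identifying $\sum_{n\ge 0}[\overline{p}_e(n)-\overline{p}_o(n)]q^n$ with the series on the left-hand side of Ramanujan's identity. The combinatorial data in Section~2 is arranged to make this translation essentially automatic, so the argument reduces to careful generating-function bookkeeping.

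First I would fix a non-negative integer $k$ and compute the signed generating function
\[
G_k(q) \;=\; \sum_{\overline{\pi}} (-1)^{\nu(\overline{\pi})} q^{|\overline{\pi}|},
\]
where $\overline{\pi}$ ranges over the overpartitions allowed for this $k$ in the definition of $\mathcal{P}_n$. A non-overlined part of odd size $2j+1\le 2k+1$ may appear with any multiplicity $m\ge 0$, contributing $(-q^{2j+1})^m$ to the weight; summing the geometric series gives $\prod_{j=0}^{k}(1+q^{2j+1})^{-1} = 1/(-q;q^2)_{k+1}$. An overlined part of odd size $2j+1\le 2k-1$ is either absent or appears once with weight $-q^{2j+1}$, contributing $1-q^{2j+1}$; the product over permitted $j$ gives $(q;q^2)_k$. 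Hence $G_k(q) = (q;q^2)_k/(-q;q^2)_{k+1}$. Multiplying by the factor $q^k$ accounting for the integer $k$ in the pair $(k,\overline{\pi})$ (since $|\overline{\pi}| = n-k$) and summing over $k\ge 0$ yields
\[
\sum_{n \ge 0}\bigl[\overline{p}_e(n) - \overline{p}_o(n)\bigr] q^n \;=\; \sum_{k \ge 0}\frac{(q;q^2)_k\, q^k}{(-q;q^2)_{k+1}}.
\]

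By Theorem~\ref{fq4}, the right-hand side equals $f(q^4) = \sum_{k\ge 0}(-1)^k q^{2k(k+1)}$, and comparing coefficients of $q^n$ produces the stated closed form. The only real obstacle is keeping the two different bounds straight — $2k+1$ for non-overlined parts and $2k-1$ for overlined ones — so that the $q$-Pochhammer symbols match the summand of Theorem~\ref{fq4} exactly; this asymmetry is precisely what accounts for the discrepancy in length between $(q;q^2)_k$ and $(-q;q^2)_{k+1}$ and motivates the restriction on overlined parts baked into the definition of $\mathcal{P}_n$. Once this generating-function translation is in hand, the bijective proof of Theorem~\ref{fq4} promised in Section~4 will, through the identification above, upgrade the whole argument to a fully combinatorial proof of Ramanujan's identity itself.
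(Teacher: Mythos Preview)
Your generating-function computation is correct, and in fact it is precisely the content of the paper's Theorem~\ref{equivalence}: it establishes that Theorem~\ref{fq4} and Theorem~\ref{interpretation} are equivalent. So as a derivation of Theorem~\ref{interpretation} \emph{conditional on} Ramanujan's identity, what you wrote is valid.

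However, the direction of inference is the opposite of the paper's. The paper does not deduce Theorem~\ref{interpretation} from Theorem~\ref{fq4}. Section~4 proves Theorem~\ref{interpretation} \emph{directly and combinatorially}, by building sign-reversing involutions on $\mathcal{P}_n$: ordinary conjugation of the boxed $2$-modular diagram handles the case $k+\nu(\overline{\pi})\equiv 1\pmod 2$, and two variants $\phi_s$ and $\phi_r$ (fixing the bottom row or rightmost column before conjugating) handle the remaining cases, until only the single pair with $\overline{\pi}=((2k+1)^k)$ survives when $n=2k(k+1)$. The equivalence you established (the paper's Theorem~\ref{equivalence}) then converts this into the first bijective proof of Ramanujan's identity. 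Your final sentence has this backwards: Section~4 is the bijective proof of Theorem~\ref{interpretation}, not of Theorem~\ref{fq4}. If you invoke Theorem~\ref{fq4} to get Theorem~\ref{interpretation}, you are ultimately relying on Andrews's analytic proof, and the paper's combinatorial contribution evaporates.
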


\begin{theorem}\label{equivalence}
Theorem \ref{fq4} and Theorem \ref{interpretation} are equivalent.
\end{theorem}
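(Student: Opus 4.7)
The plan is to verify directly that both sides of Theorem \ref{fq4} are the generating function $\sum_{n\ge 0}\bigl(\overline{p}_e(n)-\overline{p}_o(n)\bigr)q^n$, so Theorems \ref{fq4} and \ref{interpretation} carry exactly the same information.

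First I would handle the easy side. Expanding the definition of $f(q^4)$ in (\ref{fq}) gives
$$f(q^4)=\sum_{k=0}^\infty (-1)^k q^{2k(k+1)},$$
which is precisely $\sum_{n\ge 0}\bigl(\overline{p}_e(n)-\overline{p}_o(n)\bigr)q^n$ according to the piecewise formula in Theorem \ref{interpretation}. So the RHS of Theorem \ref{fq4} and the statement of Theorem \ref{interpretation} are tautologically equivalent once one recognizes that $\frac{4\cdot k(k+1)}{2}=2k(k+1)$.

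Next I would show that the LHS of Theorem \ref{fq4} is the generating function for pairs $(k,\overline{\pi})\in\mathcal{P}_n$ weighted by $(-1)^{\nu(\overline{\pi})}$. Fix the non-negative integer $k$; this contributes a factor $q^k$ to the weight. For each odd part-size $2j+1$ with $0\le j\le k-1$, the overpartition $\overline{\pi}$ may use any number of non-overlined copies plus an optional overlined copy, each copy counted with sign $-1$; this part-size therefore contributes
$$(1-q^{2j+1})\sum_{m\ge 0}(-q^{2j+1})^m=\frac{1-q^{2j+1}}{1+q^{2j+1}}.$$
For the top part-size $2k+1$, overlining is forbidden, so only the non-overlined geometric factor $\frac{1}{1+q^{2k+1}}$ appears. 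Taking the product over $j=0,\dots,k-1$ and including the size-$(2k+1)$ factor gives
$$\prod_{j=0}^{k-1}\frac{1-q^{2j+1}}{1+q^{2j+1}}\cdot\frac{1}{1+q^{2k+1}}=\frac{(q;q^2)_k}{(-q;q^2)_{k+1}},$$
using $(-q;q^2)_{k+1}=(-q;q^2)_k(1+q^{2k+1})$.

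Finally I would multiply by the $q^k$ from the leading column and sum over $k\ge 0$ to obtain
$$\sum_{k=0}^\infty\frac{(q;q^2)_k\,q^k}{(-q;q^2)_{k+1}},$$
which is the LHS of Theorem \ref{fq4}. Since both sides of Theorem \ref{fq4} agree term-by-term with $\sum_n\bigl(\overline{p}_e(n)-\overline{p}_o(n)\bigr)q^n$, the two theorems are equivalent. The only subtle bookkeeping is keeping the restriction that parts of size $2k+1$ cannot be overlined separate from the other part-sizes; once the factorization above is done carefully, the telescoping $(-q;q^2)_k(1+q^{2k+1})=(-q;q^2)_{k+1}$ does all the remaining work, so I do not expect any serious obstacle.
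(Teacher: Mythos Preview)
Your proposal is correct and follows essentially the same approach as the paper: both arguments interpret $q^k$ as recording the integer $k$, interpret $(q;q^2)_k$ as generating the overlined (distinct) odd parts of size at most $2k-1$ with sign $(-1)^{\nu}$, and interpret $\frac{1}{(-q;q^2)_{k+1}}$ as generating the non-overlined odd parts of size at most $2k+1$ with the same sign convention. The only difference is cosmetic: you factor the product part-size by part-size and write out the geometric series explicitly, whereas the paper states the interpretation of each $q$-product in one line, but the content is identical.
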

\begin{proof}
The equivalence of the right sides is trivial, so we focus on the left sides. Note that $(q;q^2)_k$ generates partitions into distinct odd parts of size no greater than $2k-1$, where each partition into $\nu$ parts has weight $(-1)^\nu$.  Similarly, $\frac{1}{(-q;q^2)_{k+1}}$ generates partitions into odd parts of size no greater than $2k+1$, where each partition into $\nu$ parts has weight $(-1)^\nu.$ Thus, if we let the parts coming from $(q;q^2)_k$ be overlined, we find that $\frac{(q;q^2)_k}{(-q;q^2)_{k+1}}$ generates overpartitions into odd parts of size no greater than $2k+1$, where all overlined parts are no larger than $2k-1$, and each overpartition into $\nu$ parts is counted with weight $(-1)^\nu$. Additionally, $q^k$ generates the integer $k$.  Therefore $$\sum_{k=0}^\infty \frac{(q;q^2)_k q^k}{(-q;q^2)_{k+1}}=\sum_{n=0}^\infty\{\overline{p}_e(n)-\overline{p}_o(n)\}q^n,$$ where $\overline{p}_e(n)$ and $\overline{p}_o(n)$ are as defined in Theorem \ref{interpretation}.
\end{proof}

\section{Proof of the Main Theorem}
We devote this section to proving Theorem \ref{interpretation} combinatorially. To obtain the bijection, we split $\mathcal{P}_n$ into cases.  First, we show that conjugation is a sign-reversing involution on the case where $k+\nu(\overline{\pi})\equiv 1\pmod 2$.  Then, for the case where $k+\nu(\overline{\pi})\equiv 0\pmod 2$, we further divide this subset of $\mathcal{P}_n$ into cases depending on the relative sizes of the last row and the last column of the boxed $2$-modular diagram and introduce variations of conjugation that provide sign-reversing bijections and involutions on these cases.
\subsection{Conjugation}
For an ordinary partition $\pi$, the conjugate partition $\pi'$ is defined to be the partition created by reflecting the Ferrers diagram of $\pi$ about the line $y=-x$. Similarly, for a pair $(k,\pi)$, where $k$ is a non-negative integer and $\pi$ is a partition into odd parts of size $\le 2k+1$, we can reflect our boxed $2$-modular diagram about the line $y=-x$ to get the conjugate pair $(k',\overline{\pi}')$.

\begin{example}
The conjugate of $\left(4,(9,9,7,7,5,5,3)\right)$ is $ (7,(15, 13, 9, 5)).$
\begin{center}
\ytableausetup{boxsize=1em}
\begin{minipage}{0.17\textwidth}
\begin{ytableau}
0&1&1&1&1\\
1&2&2&2&2\\
1&2&2&2&2\\
1&2&2&2\\
1&2&2&2\\
1&2&2\\
1&2&2\\
1&2
\end{ytableau}
\end{minipage}
$\to$
\;\;
\begin{minipage}{0.35\textwidth}
\begin{ytableau}
0&1&1&1&1&1&1&1\\
1&2&2&2&2&2&2&2\\
1&2&2&2&2&2&2\\
1&2&2&2&2\\
1&2&2
\end{ytableau}
\end{minipage}
\end{center}
\end{example}

Furthermore, if we have a pair $(k,\overline{\pi})\in\mathcal{P}_n$, we can define the conjugate pair $(k',\overline{\pi}')$ by taking the conjugate and overlining the $(j+1)^{\text{st}}$ part in $\overline{\pi}'$ for every $j$ where a part of size $2j+1$ is overlined in $\overline{\pi}$.

\begin{example}
The conjugate of $(3,(7,\overline{5},5,5,\overline{3},1,1,1))$ is $(8,(11,\overline{9},\overline{3})).$
\begin{center}
\begin{minipage}{0.13\textwidth}
\begin{ytableau}
0&1&1&1\\
1&2&2&2\\
1&2&*(gray)2\\
1&2&2\\
1&2&2\\
1&*(gray)2\\
1\\
1\\
1\\
\end{ytableau}
\end{minipage}
$\to$\;\;\;
\begin{minipage}{0.3\textwidth}
\begin{ytableau}
0&1&1&1&1&1&1&1&1\\
1&2&2&2&2&2\\
1&2&2&2&*(gray)2\\
1&*(gray)2
\end{ytableau}
\end{minipage}
\end{center}
\end{example}

Note that, because conjugation swaps rows and columns and preserves the boxes in the diagram, $k'=\nu(\overline{\pi})$, $\nu(\overline{\pi}')=k$, and $k'+|\overline{\pi}'|=k+|\overline{\pi}|$. Furthermore, since conjugation is its own inverse, we obtain the following lemma.

\begin{lemma}\label{conjbij}
Let $\mathcal{S}_{n,k,\ell}$ be the set of pairs $(k,\overline{\pi})$, where $k$ is a non-negative integer and $\overline{\pi}$ is an overpartition of $n-k$ into $\ell$ odd parts of size $\le 2k+1$, with all overlined parts no larger than $2k-1$.  
%Then, the map $\phi:\mathcal{S}_{n,k,\ell}\to \mathcal{S}_{n,\ell,k}$ defined by conjugation is a bijection.  
Then, $|\mathcal{S}_{n,k,\ell}|=|\mathcal{S}_{n,\ell,k}|$.
\end{lemma}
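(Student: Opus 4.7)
The plan is to use conjugation of boxed $2$-modular diagrams (as defined earlier in this subsection) as the explicit bijection $\mathcal{S}_{n,k,\ell}\to\mathcal{S}_{n,\ell,k}$. The three identities noted just above the lemma---namely $k'=\nu(\overline{\pi})$, $\nu(\overline{\pi}')=k$, and $k'+|\overline{\pi}'|=k+|\overline{\pi}|=n$---already show that conjugation lands back in $\mathcal{P}_n$ and swaps $k$ with the number of parts. Thus for $(k,\overline{\pi})\in\mathcal{S}_{n,k,\ell}$ the conjugate pair has index $k'=\ell$ and $\nu(\overline{\pi}')=k$, and what remains is to confirm that $\overline{\pi}'$ is an overpartition of $n-\ell$ into odd parts of size at most $2\ell+1$ with overlined parts of size at most $2\ell-1$.

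I would dispatch the unconstrained part-size bound by reading it off the conjugate diagram: its top row is the transpose of the first column of the original, which consists of one $0$ followed by $\ell$ $1$'s. Hence the conjugate has $\ell+1$ columns, and every subsequent row begins with a $1$ and is filled out with $2$'s, making each part of $\overline{\pi}'$ odd and at most $2\ell+1$.

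The main obstacle is the overlined bound. A short count of column heights in the original gives
\begin{equation*}
\text{size of the $m$-th part of }\overline{\pi}' \;=\; 2N_m+1, \qquad N_m:=\bigl|\{\,i:\overline{\pi}_i\ge 2m+1\,\}\bigr|.
\end{equation*}
By the overlining rule, the $m$-th part of $\overline{\pi}'$ is overlined precisely when some part of $\overline{\pi}$ of size $2m-1$ is overlined; because that part has size strictly less than $2m+1$, at least one of the $\ell$ parts of $\overline{\pi}$ fails to be counted by $N_m$, giving $N_m\le\ell-1$ and hence the required bound $2N_m+1\le 2\ell-1$. The same calculation also shows that if two consecutive parts of $\overline{\pi}'$ share a size (so $N_m=N_{m+1}$, forcing no part of $\overline{\pi}$ to equal $2m+1$), then the later of the two cannot receive an overline, so $\overline{\pi}'$ really is an overpartition.

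To finish, I would observe that the overlining rule pairs the overlined part of size $2m-1$ in $\overline{\pi}$ (which necessarily sits at position $N_m+1$, since $N_m$ is exactly the number of strictly larger parts) with the overlined $m$-th part of $\overline{\pi}'$, and applying the rule a second time returns each overline to its original location. Hence conjugation is an involution, and in particular an explicit bijection between $\mathcal{S}_{n,k,\ell}$ and $\mathcal{S}_{n,\ell,k}$, proving the lemma.
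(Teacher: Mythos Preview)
Your proof is correct and follows the same approach as the paper: conjugation of boxed $2$-modular diagrams furnishes the bijection. The paper is much terser---it simply records the identities $k'=\nu(\overline{\pi})$, $\nu(\overline{\pi}')=k$, $k'+|\overline{\pi}'|=k+|\overline{\pi}|$ and the fact that conjugation is its own inverse, and declares the lemma---whereas you carefully verify that $\overline{\pi}'$ satisfies the overline bound $\le 2\ell-1$, that it is a genuine overpartition, and that the overlining rule is involutive. Your added detail is correct and welcome; in particular, the computation $N_m\le\ell-1$ whenever the $m$-th part of $\overline{\pi}'$ is overlined, and the observation that $N_m=N_{m+1}$ forces no part of size $2m+1$ in $\overline{\pi}$, are exactly the checks the paper leaves implicit.
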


When $k+\nu(\overline{\pi})\equiv 1\pmod2$, conjugation is sign-reversing, which leads to the next lemma.

\begin{lemma}\label{conj}
Conjugation is a sign-reversing involution on pairs $(k,\overline{\pi})\in\mathcal{P}_n$ counted with weight $(-1)^{\nu(\overline{\pi})}$, where $k+\nu(\overline{\pi})\equiv 1\pmod 2$.
\end{lemma}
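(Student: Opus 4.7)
The plan is to check three things: (i) conjugation defines a map on the subset $\{(k,\overline{\pi})\in\mathcal{P}_n : k+\nu(\overline{\pi})\equiv 1\pmod 2\}$, (ii) it is an involution there, and (iii) it reverses the sign $(-1)^{\nu(\overline{\pi})}$. The underlying bijection on shapes is already handled by Lemma \ref{conjbij}, so the heart of the matter is checking that the overline-relabelling is well-defined and involutory.

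First, I would verify that the conjugate $(k',\overline{\pi}')$ actually lies in $\mathcal{P}_n$. The constraint that $\overline{\pi}$ has odd parts of size $\leq 2k+1$ guarantees that the boxed $2$-modular diagram is partition-shaped, so reflecting gives a valid diagram with $k'=\nu(\overline{\pi})$. Each subsequent row begins with a $1$ followed by $2$'s, so the parts of $\overline{\pi}'$ are odd and bounded by $2k'+1$, and the total size is preserved since the number of boxes is unchanged. For the overline conditions, the $r$-th part of $\overline{\pi}'$ has value $2\,\#\{i : \pi_i \geq 2r+1\}+1$, since the $(r+1)$-st column of the original diagram contains one $1$ from the top row plus one $2$ for each part of $\pi$ of length $\geq r+1$. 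If $2j_1+1 < 2j_2+1$ are both overlined in $\overline{\pi}$, then the count at position $j_1+1$ includes the part $2j_2+1$ while the count at position $j_2+1$ does not, giving strictly distinct values, so $\overline{\pi}'$ is a genuine overpartition. Moreover, the overlined part $2j+1$ is in $\pi$ but is missed by $\#\{i : \pi_i \geq 2j+3\}$, which forces the value at position $j+1$ to be at most $2(\nu(\overline{\pi})-1)+1 = 2k'-1$, as required.

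Second, I would check that conjugation is an involution. The shape-reflection is manifestly self-inverse, so only the overlines need tracking. If value $2j+1$ is overlined in $\overline{\pi}$, then position $j+1$ of $\overline{\pi}'$ is overlined, and its value is $2m+1$ where $m=\#\{i : \pi_i \geq 2j+3\}$. Applying the rule to $\overline{\pi}'$ places an overline at position $m+1$ of $\overline{\pi}''$, and since $\overline{\pi}''$ equals $\overline{\pi}$ as a partition and $m+1$ is precisely the position of the first occurrence of $2j+1$ in $\overline{\pi}$, the original overline is recovered. The sign-reversing property is then immediate: conjugation swaps $k$ and $\nu(\overline{\pi})$, so the sign moves from $(-1)^{\nu(\overline{\pi})}$ to $(-1)^{k}$, which is its negative precisely when $k+\nu(\overline{\pi})$ is odd; this parity is itself preserved by the swap, so the map restricts to the claimed subset and has no fixed points there (a fixed point would force $k=\nu(\overline{\pi})$, giving even parity).

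The main obstacle is the overline-transformation rule. The literal reflection of a shaded box does not in general land at the end of a row in the conjugate, so the overlining data must be re-encoded by the rule \emph{value overlined in the source equals position overlined in the target}. Verifying that this encoding produces a valid overpartition, honors the bound $\leq 2k'-1$, and is involutory takes all of the value-position bookkeeping above; once that is settled, the rest is routine conjugation of partition diagrams.
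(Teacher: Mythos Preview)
Your proof is correct and follows the same approach as the paper: invoke Lemma~\ref{conjbij} for the underlying bijection and observe that swapping $k$ and $\nu(\overline{\pi})$ reverses the sign precisely when their sum is odd. The paper's own proof is a single sentence citing Lemma~\ref{conjbij} together with this parity observation; your careful verification that the overline relabelling is well-defined and involutory simply fills in details that the paper leaves implicit in the definition of conjugation preceding Lemma~\ref{conjbij}.
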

\begin{proof}
This follows from Lemma \ref{conjbij} and the fact that $k\not\equiv \nu(\overline{\pi})\pmod 2$, so conjugation must be sign-reversing.  
\end{proof}

\subsection{Variations}
For the case $k+\nu(\overline{\pi})\equiv 0\pmod2$, we consider two variations of conjugation. First, we define $\phi_s(k,\overline{\pi})$ by fixing the smallest part of $\overline{\pi}$ and conjugating the remainder of the boxed 2-modular diagram.  
\begin{example}
 $\phi_s(4,(9,9,9,\overline{7},7,\overline{5}))=(5,(11,11,11,\overline{7},\overline{5}))$
\begin{center}
\begin{minipage}{0.2\textwidth}
\begin{ytableau}
0&1&1&1&1\\
1&2&2&2&2\\
1&2&2&2&2\\
1&2&2&2&2\\
1&2&2&*(gray)2\\
1&2&2&2\\
1&2&*(gray)2\\
\end{ytableau}
\end{minipage} 
$\xrightarrow{\phi_s}$ \quad
\begin{minipage}{0.25\textwidth}
\begin{ytableau}
0&1&1&1&1&1\\
1&2&2&2&2&2\\
1&2&2&2&2&2\\
1&2&2&2&2&2\\
1&2&2&*(gray)2\\
1&2&*(gray)2\\
\end{ytableau}
\end{minipage}
\end{center}
\end{example} 

Note that, $\phi_s$ is well-defined for pairs $(k,\overline{\pi})\in \mathcal{P}_n$ where the last row of the boxed $2$-modular diagram is shorter than the last column. Equivalently, $\phi_s$ is well-defined when $\frac{s(\overline{\pi})-1}{2} < \nu_{\ell}(k,\overline{\pi})$. Furthermore, $\phi_s$ is also well defined when $\frac{s(\overline{\pi})-1}{2}=\nu_{\ell}(k,\overline{\pi})$, $s(\overline{\pi})<2k+1$, and the last part of $\overline{\pi}$ is not overlined.  The last condition is necessary to maintain the restriction on overpartitions that only the first part of any size may be overlined.  If we define $(k_s,\overline{\pi}_s)=\phi_s((k,\overline{\pi}))$, we can note that $k_s=\nu(\overline{\pi})-1$ and $\nu(\overline{\pi}_s)=k+1$. Moreover, the size of the penultimate part of $\overline{\pi}$ determines $\nu_\ell(\phi_s(k,\overline{\pi}))$, so we consider separately the cases where $\nu_s(\overline{\pi})=1$ and $\nu_s(\overline{\pi})>1$. Then, we have the following lemmas.
\begin{lemma}\label{phis}
The map $\phi_s$ is a sign-reversing involution on the set $\{(k,\overline{\pi})\in\mathcal{P}_n: k+\nu(\overline{\pi})\equiv 0\pmod2, s(\overline{\pi})<2\nu_\ell(k,\overline{\pi})+1, \text{ and } \nu_s(\overline{\pi})=1 \}.$
\end{lemma}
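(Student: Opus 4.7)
The plan is to verify four things: (i) that $\phi_s$ sends each pair in the given set to a valid element of $\mathcal{P}_n$, (ii) that the image still lies in the same subset, (iii) that $\phi_s$ reverses the sign $(-1)^{\nu(\overline{\pi})}$, and (iv) that $\phi_s\circ\phi_s$ is the identity. All four rely on the two observations recorded just before the lemma, namely $k_s=\nu(\overline{\pi})-1$ and $\nu(\overline{\pi}_s)=k+1$, together with a careful reading of the boxed $2$-modular diagram.

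First I would check (i). The last row of the diagram has length $(s(\overline{\pi})+1)/2$ and the last column has length $1+\nu_\ell(k,\overline{\pi})$, so the hypothesis $s(\overline{\pi})<2\nu_\ell(k,\overline{\pi})+1$ is exactly the statement that the last row is shorter than the last column; deleting it therefore leaves a partition-shaped subdiagram whose conjugate is again a valid boxed $2$-modular diagram, to which the fixed smallest row can be reattached at the bottom. The closure (ii) then splits into three pieces: the parity $k_s+\nu(\overline{\pi}_s)=k+\nu(\overline{\pi})\equiv 0\pmod 2$ is automatic, and it yields (iii) at once, since $\nu(\overline{\pi}_s)=k+1$ has opposite parity to $\nu(\overline{\pi})$.

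For the remaining closure conditions I would use length counts on the deleted subdiagram. The shortest row produced by the conjugation has length $1+\nu_\ell(k,\overline{\pi})$, hence represents a part of size $2\nu_\ell(k,\overline{\pi})+1$, which by the hypothesis is strictly larger than $s(\overline{\pi})$; so the reattached smallest remains the unique minimum, giving $\nu_s(\overline{\pi}_s)=1$ and $s(\overline{\pi}_s)=s(\overline{\pi})$. A similar count identifies $\nu_\ell(k_s,\overline{\pi}_s)$ as the number of columns $j\in\{2,\ldots,k+1\}$ of the deleted subdiagram that reach all $\nu(\overline{\pi})$ of its rows, which works out to $(\pi_{\nu(\overline{\pi})-1}-1)/2$; the required $s(\overline{\pi}_s)<2\nu_\ell(k_s,\overline{\pi}_s)+1$ then becomes $s(\overline{\pi})<\pi_{\nu(\overline{\pi})-1}$, which is automatic from $\nu_s(\overline{\pi})=1$. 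The involution (iv) is immediate afterwards: $\phi_s$ preserves both the value and the uniqueness of the smallest part, so applying it again fixes that same bottom row and conjugates the remainder, undoing the first conjugation.

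The main obstacle will be the overlining bookkeeping, since the image must obey the tighter bound ``overlined parts have size $\le 2k_s-1=2\nu(\overline{\pi})-3$.'' I would handle the fixed smallest part separately: if it is overlined in the original, then $s(\overline{\pi})\le 2k-1<2k+1$, so it is not of size $2k+1$ and hence $\nu_\ell(k,\overline{\pi})\le \nu(\overline{\pi})-1$, whence $s(\overline{\pi})\le 2\nu_\ell(k,\overline{\pi})-1\le 2\nu(\overline{\pi})-3$ as required. For overlines moved by the conjugation, a size-$2j+1$ overline in $\overline{\pi}$ with $j\le k-1$ induces an overline at the $(j+1)$-st new part, whose row length equals the number of rows of the deleted subdiagram of length $\ge j+2$; since every retained part is at least $\pi_{\nu(\overline{\pi})-1}$ and the overlined part of size $2j+1$ is itself retained, we have $\pi_{\nu(\overline{\pi})-1}\le 2j+1$, and this count is bounded by $\nu(\overline{\pi})-1$, yielding the required bound $2\nu(\overline{\pi})-3$ on the induced overlined part.
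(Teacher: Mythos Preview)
Your proof is correct and follows essentially the same approach as the paper's own argument: both verify that $\phi_s$ preserves the defining conditions of the set (using that the penultimate part governs $\nu_\ell$ of the image and that the strict inequality forces $\nu_s(\overline{\pi}_s)=1$) and that $\nu(\overline{\pi}_s)=k+1$ has opposite parity to $\nu(\overline{\pi})$. The only difference is that you supply an explicit verification that the overlining constraints survive---checking separately that a fixed overlined smallest part and each conjugation-induced overline land at size $\le 2k_s-1$---whereas the paper leaves this implicit in the well-definedness discussion preceding the lemma.
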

\begin{proof}
Let $(k,\overline{\pi})\in \mathcal{P}_n$ such that $k+\nu(\overline{\pi})\equiv 0\pmod2, s(\overline{\pi})<2\nu_\ell(k,\overline{\pi})+1, \text{ and } \nu_s(\overline{\pi})=1.$ Let $(k_s,\overline{\pi}_s)=\phi_s(k,\overline{\pi})$. Since $\nu_s(\overline{\pi})=1$, the second smallest part of $\overline{\pi}$, which determines $\nu_\ell(k_s,\overline{\pi}_s)$, will be larger than $s(\overline{\pi})=s(\overline{\pi}_s)$, so $s(\overline{\pi}_s)<2\nu_\ell(k_s,\overline{\pi}_s)+1$. Moreover, since $s(\overline{\pi})<2\nu_\ell(k,\overline{\pi})+1$, $\nu_s(\overline{\pi}_s)=1$.  Thus, $(k_s,\overline{\pi}_s)\in \mathcal{P}_n$ such that $k_s+\nu(\overline{\pi}_s)\equiv 0\pmod2, s(\overline{\pi}_s)<2\nu_\ell(k_s,\overline{\pi}_s)+1, \text{ and } \nu_s(\overline{\pi}_s)=1$. Finally, since $ \nu(\overline{\pi})\equiv k \not\equiv k+1\pmod2$ and $\nu(\overline{\pi}_s)=k+1$, $\nu(\overline{\pi}_s)\not\equiv \nu(\overline{\pi}) \pmod2$, so the map is sign-reversing. 
\end{proof}

\begin{lemma}\label{phis2}
The map $\phi_s$ is a sign-reversing involution on the set $\{(k,\overline{\pi})\in\mathcal{P}_n: k+\nu(\overline{\pi})\equiv 0\pmod2, s(\overline{\pi})=2\nu_\ell(k,\overline{\pi})+1, s(\overline{\pi})\neq 2k+1 \text{, and } \nu_s(\overline{\pi})>1 \}.$
\end{lemma}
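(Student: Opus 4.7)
The plan is to mirror the argument of Lemma \ref{phis}, checking well-definedness, closure under $\phi_s$, the involution property, and sign-reversal.

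First, I will verify $\phi_s$ is defined on the set. According to the criterion stated just before Lemma \ref{phis}, $\phi_s$ is defined precisely when the last row of the boxed $2$-modular diagram is strictly shorter than the last column, or when they are equal with $s(\overline{\pi})<2k+1$ and the last part unoverlined. The hypothesis $s(\overline{\pi})=2\nu_\ell(k,\overline{\pi})+1$ places us in the equality case; $s(\overline{\pi})\neq 2k+1$ together with the general bound $s(\overline{\pi})\leq 2k+1$ gives $s(\overline{\pi})<2k+1$; and $\nu_s(\overline{\pi})>1$ forces the bottom copy of the smallest part to be unoverlined, since only the first occurrence of any part size may carry an overline.

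Next, writing $(k_s,\overline{\pi}_s)=\phi_s(k,\overline{\pi})$, I will show $(k_s,\overline{\pi}_s)$ lies in the same set. By construction $k_s=\nu(\overline{\pi})-1$ and $\nu(\overline{\pi}_s)=k+1$, so $k_s+\nu(\overline{\pi}_s)=k+\nu(\overline{\pi})\equiv 0\pmod 2$. The heart of the argument is a column-length count in the remainder (the original diagram with its bottom row removed): since every remaining part still has size at least $s$, columns $1,\dots,\nu_\ell(k,\overline{\pi})+1$ of the remainder all attain the full length $\nu(\overline{\pi})$, so conjugation produces exactly $\nu_\ell(k,\overline{\pi})$ parts of size $2k_s+1$, giving $\nu_\ell(k_s,\overline{\pi}_s)=\nu_\ell(k,\overline{\pi})$ and $s(\overline{\pi}_s)=s=2\nu_\ell(k_s,\overline{\pi}_s)+1$. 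The hypotheses $s\neq 2k+1$ and $\nu_s(\overline{\pi})\geq 2$ make the largest and smallest part-multisets disjoint, forcing $\nu_\ell(k,\overline{\pi})\leq \nu(\overline{\pi})-2$, which delivers $s(\overline{\pi}_s)\leq 2\nu(\overline{\pi})-3<2k_s+1$. Column $k+1$ of the remainder has length exactly $\nu_\ell(k,\overline{\pi})+1$, so the conjugated remainder already contains at least one part of size $s$, and reattaching the fixed copy gives $\nu_s(\overline{\pi}_s)\geq 2$.

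The involution property follows because conjugation of the remainder is its own inverse and the smallest part is held fixed throughout; the map is sign-reversing because $\nu(\overline{\pi}_s)=k+1$ has opposite parity to $\nu(\overline{\pi})\equiv k\pmod 2$. The main obstacle I anticipate is verifying that $\phi_s(k,\overline{\pi})\in\mathcal{P}_n$, particularly confirming via the overline-tracking rule from Section 4.1 that no overlined part in $\overline{\pi}_s$ has size $2k_s+1$. The argument is: an overlined part of size $2j+1$ in the remainder becomes an overlined $(j+1)$-st largest part in the conjugate, which comes from column $j+2$ of the remainder; since the remainder itself contains a (non-contributing to that column) part of size $2j+1$, this column has length at most $\nu(\overline{\pi})-1$, so the resulting overlined part has size at most $2(\nu(\overline{\pi})-1)-1=2k_s-1$, as required.
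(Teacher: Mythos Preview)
Your proposal is correct and follows essentially the same approach as the paper's proof: verify that the equality-case criterion for $\phi_s$ applies, then check that $(k_s,\overline{\pi}_s)$ satisfies each of the four defining conditions of the set, and conclude sign-reversal from $\nu(\overline{\pi}_s)=k+1\not\equiv k\equiv\nu(\overline{\pi})\pmod 2$. Your column-length computations (for $\nu_\ell(k_s,\overline{\pi}_s)$, for $\nu_s(\overline{\pi}_s)\ge 2$, and for the overline bound $\le 2k_s-1$) spell out in more detail what the paper asserts tersely, and your explicit check that no overlined part of $\overline{\pi}_s$ reaches size $2k_s+1$ is a point the paper leaves implicit; all of these verifications are sound.
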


\begin{proof}
Let $(k,\overline{\pi})\in \mathcal{P}_n$ such that $k+\nu(\overline{\pi})\equiv 0\pmod2,\, s(\overline{\pi})=2\nu_\ell(k,\overline{\pi})+1,\, s(\overline{\pi})\neq 2k+1 \text{ and } \nu_s(\overline{\pi})>1.$ Since $s(\overline{\pi})\neq 2k+1$, $\phi_s$ is well-defined and we can let $(k_s,\overline{\pi}_s)=\phi_s(k,\overline{\pi})$. Since $\nu_s(\overline{\pi})>1$, $2\nu_\ell(k_s,\overline{\pi}_s)+1=s(\overline{\pi})=s(\overline{\pi}_s)$.  Furthermore, because $s(\overline{\pi})\ne 2k+1$ and $\nu_s(\overline{\pi})>1$, $\nu_\ell(k,\overline{\pi})<\nu(\overline{\pi})-1$, so $s(\overline{\pi}_s)=s(\overline{\pi})=2\nu_\ell(k,\overline{\pi})+1<2\nu(\overline{\pi})-1=2k_s+1$.  Moreover, since $s(\overline{\pi})=2\nu_\ell(k,\overline{\pi})+1$, $\nu_s(\overline{\pi}_s)>1$.  Thus, $(k_s,\overline{\pi}_s)\in \mathcal{P}_n$ such that $k_s+\nu(\overline{\pi}_s)\equiv 0\pmod2,$ $s(\overline{\pi}_s)=2\nu_\ell(k_s,\overline{\pi}_s)+1,$ $s(\overline{\pi}_s)\ne 2k+1, \text{ and } \nu_s(\overline{\pi}_s)>1$. Finally, as explained above, the map is sign-reversing because $\nu(\overline{\pi}_s)\not\equiv \nu(\overline{\pi}) \pmod 2$. 
\end{proof}

Another variation of conjugation is $\phi_r$, defined as $\phi_r(k,\overline{\pi})=\text{conj}\circ \phi_s\circ \text{conj}(k,\overline{\pi})$, where $\text{conj}$ is the conjugation map described in Section 4.1. Note that this is the same as fixing the right-most column of the boxed $2$-modular diagram, conjugating the remainder, and making a small adjustment to which parts are overlined.
\begin{example}
We have $\phi_r(5,(11,11,9,9,\overline{7},7,7))=(8,(17,17,15,\overline{9})).$
\begin{center}
\begin{minipage}{0.19\textwidth}
\begin{ytableau}
0&1&1&1&1&1\\
1&2&2&2&2&2\\
1&2&2&2&2&2\\
1&2&2&2&2\\
1&2&2&2&2\\
1&2&2&*(gray)2\\
1&2&2&2\\
1&2&2&2
\end{ytableau}
\end{minipage}
$\xrightarrow{\;\text{conj}\;\;}\;$
\begin{minipage}{0.26\textwidth}
\begin{ytableau}
0&1&1&1&1&1&1&1\\
1&2&2&2&2&2&2&2\\
1&2&2&2&2&2&2&2\\
1&2&2&2&2&2&2&2\\
1&2&2&2&*(gray)2\\
1&2&2
\end{ytableau}
\end{minipage}
$\xrightarrow{ \;\;  \phi_s  \;\; }$
\begin{minipage}{0.16\textwidth}
\begin{ytableau}
0&1&1&1&1\\
1&2&2&2&2\\
1&2&2&2&2\\
1&2&2&2&2\\
1&2&2&2&2\\
1&2&2&*(gray)2\\
1&2&2&2\\
1&2&2&2\\
1&2&2\\
\end{ytableau}
\end{minipage}\\
$\xrightarrow{\;\;\text{conj}\;\;}$
\begin{minipage}{0.3\textwidth}
\begin{ytableau}
0&1&1&1&1&1&1&1&1\\
1&2&2&2&2&2&2&2&2\\
1&2&2&2&2&2&2&2&2\\
1&2&2&2&2&2&2&2\\
1&2&2&2&*(gray)2
\end{ytableau}
\end{minipage}
\end{center}
\end{example}

Note that $\phi_r$ is well-defined for pairs $(k,\overline{\pi})\in \mathcal{P}_n$ where the last row of the boxed $2$ modular diagram is longer than the last column.  Equivalently, $\phi_r$ is well-defined when $\frac{s(\overline{\pi})-1}{2}>\nu_\ell(k,\overline{\pi})$. Thus, we obtain the following lemma.
\begin{lemma}\label{phir}
The map $\phi_r$ is a sign reversing involution on the set $\{(k,\overline{\pi})\in\mathcal{P}_n: k+\nu(\overline{\pi})\equiv 0\pmod2, s(\overline{\pi})>2\nu_\ell(k,\overline{\pi})+1, \text{ and } \overline{\pi} \text{ has a part of size } 2k-1 \}.$
\end{lemma}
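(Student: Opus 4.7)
The plan is to exploit the decomposition $\phi_r=\text{conj}\circ\phi_s\circ\text{conj}$: translate each defining hypothesis through the first conjugation so that the image lies exactly in the domain of Lemma \ref{phis}, apply $\phi_s$ there, then conjugate back. Since $\text{conj}$ and $\phi_s$ are already known to be involutions on the relevant sets, the involution property of $\phi_r$ then follows from the telescoping identity $\phi_r\circ\phi_r=\text{conj}\circ\phi_s\circ(\text{conj}\circ\text{conj})\circ\phi_s\circ\text{conj}=\text{id}$, and the sign-reversing property is inherited from $\phi_s$ (each outer conjugation swaps $k$ with $\nu(\overline{\pi})$ but preserves the sum modulo $2$).

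Writing $(k',\overline{\pi}')=\text{conj}(k,\overline{\pi})$, the parity hypothesis $k+\nu(\overline{\pi})\equiv 0\pmod 2$ is tautologically preserved. The other two hypotheses must be read off the boxed $2$-modular diagram. The bottom part row has width $\tfrac{s(\overline{\pi})+1}{2}$ and the rightmost column has length $1+\nu_\ell(k,\overline{\pi})$; transposition interchanges these, yielding
$$s(\overline{\pi}')=2\nu_\ell(k,\overline{\pi})+1 \qquad\text{and}\qquad 2\nu_\ell(k',\overline{\pi}')+1=s(\overline{\pi}),$$
so the inequality $s(\overline{\pi})>2\nu_\ell(k,\overline{\pi})+1$ translates into the inequality $s(\overline{\pi}')<2\nu_\ell(k',\overline{\pi}')+1$ required by Lemma \ref{phis}. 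For the third condition, a part of size $2k-1$ in $\overline{\pi}$ corresponds to a row of width $k$; its presence is equivalent to the next-to-last column of the diagram being strictly taller than the rightmost column, which under transposition says that the penultimate part of $\overline{\pi}'$ is strictly larger than its minimum, i.e., $\nu_s(\overline{\pi}')=1$.

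Combined, these translations show that $(k',\overline{\pi}')$ satisfies all the hypotheses of Lemma \ref{phis}, which therefore provides an involution sending $(k',\overline{\pi}')$ to some $(k_s,\overline{\pi}_s)$ in the same set. Conjugating back then lands inside our target set for Lemma \ref{phir}, by exactly the same translation arguments applied in reverse. Tracking the parameters, $\phi_r$ sends $\nu(\overline{\pi})$ to $k-1$, which has opposite parity to $\nu(\overline{\pi})$ since $k\equiv\nu(\overline{\pi})\pmod 2$, confirming the sign-reversing claim. The main obstacle is the correct combinatorial interpretation of the ``part of size $2k-1$'' hypothesis, where an off-by-one error in the row/column correspondence could easily derail the argument.
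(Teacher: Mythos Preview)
Your proof is correct and follows the same route as the paper: conjugate, verify that the image lands in the domain of Lemma~\ref{phis}, invoke that lemma, and conjugate back. Your version is considerably more detailed than the paper's---the paper simply asserts that $s(\overline{\pi}')<2\nu_\ell(k',\overline{\pi}')+1$ and $\nu_s(\overline{\pi}')=1$ without spelling out the row/column bookkeeping---but the underlying argument is identical.
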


\begin{proof}
Let $(k,\overline{\pi})\in \mathcal{P}_n$ such that $k+\nu(\overline{\pi})\equiv 0\pmod2, s(\overline{\pi})>2\nu_\ell(k,\overline{\pi})+1, \text{ and } \overline{\pi} \text{ has a part of size } 2k-1.$ Let $(k',\overline{\pi}')$ be the conjugate of $(k,\overline{\pi})$.  Then, $s(\overline{\pi}')<2\nu_\ell(k',\overline{\pi}')+1$ and $\nu_s(\overline{\pi})=1$, so we can apply Lemma 4.3.  
%Let $(k_r,\overline{\pi}_r)=\phi_r((k,\overline{\pi}))$. Since $\overline{\pi}$ has a part of size $2k-1$, $s(\overline{\pi}_r)>2\nu_\ell((k_r,\overline{\pi}_r))+1$. Moreover, since $s(\overline{\pi})>2\nu_\ell(k,\overline{\pi})+1$, $\overline{\pi}_r$ has a part of size $2k_r-1$.  Thus, $(k_r,\overline{\pi}_r)\in \mathcal{P}_n$ such that $k_r+\nu(\overline{\pi}_r)\equiv 0\pmod2, s(\overline{\pi}_r)>2\nu_\ell(k_r,\overline{\pi}_r)+1,\text{ and } \overline{\pi_r} \text{ has a part of size } 2k_r-1.$. Finally, the map is sign-reversing because $k+\nu(\overline{\pi})-1\not\equiv 0\pmod 2$. 
\end{proof}

After applying Lemmas \ref{conj}, \ref{phis}, \ref{phis2}, and \ref{phir}, we are left with four cases, all of which have $k+\nu(\overline{\pi})\equiv 0\pmod 2$.
\begin{itemize}
\item Case 1: Smallest part of $\overline{\pi}$ appears once and is equal to $2\nu_\ell(k,\overline{\pi})+1\ne 2k+1$.
\item Case 2: Smallest part of $\overline{\pi}$ appears multiple times and is smaller than $2\nu_\ell(k,\overline{\pi}) +1$.
\item Case 3: Smallest part of $\overline{\pi}$ is greater than $2\nu_\ell(k,\overline{\pi})+1$ and $\overline{\pi}$ has no part of size $2k-1$.
\item Case 4: $s(\overline{\pi})=2\nu_\ell(k,\overline{\pi})+1=2k+1.$
\end{itemize}

 Note that applying $\phi_s$ to a pair $(k,\overline{\pi})$ in Case 1 reduces the number of distinct parts by one. Since the number of overpartitions of a given shape depends on the number of distinct parts, reducing the number of distinct parts by one requires us to restrict which parts of $\overline{\pi}$ may be overlined.  The next two lemmas provide the details of dividing Case 1 into two halves by considering whether or not the smallest part is overlined. 

\begin{lemma}\label{cases13}
There is a sign-reversing bijection between the pairs in Case 1 where the smallest part is not overlined and the pairs in Case 2.
\end{lemma}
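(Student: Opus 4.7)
The bijection is the map $\phi_s$ itself, restricted appropriately. I will argue that $\phi_s$ sends Case 1 (smallest not overlined) to Case 2 and that $\phi_s$ sends Case 2 to Case 1 (smallest not overlined); since $\phi_s$ acts as an involution on diagrams (conjugation is self-inverse, and the added-back last row is again the smallest in the image), these two inclusions are inverse to each other. In both directions, $k_s + \nu(\overline{\pi}_s) = (\nu(\overline{\pi}) - 1) + (k+1) = k + \nu(\overline{\pi})$, so the parity condition is preserved, and because $(k+1) + \nu(\overline{\pi})$ is odd whenever $k + \nu(\overline{\pi})$ is even, the map is sign-reversing.

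For Case 1 (smallest not overlined) mapping to Case 2, the hypothesis $s(\overline{\pi}) = 2\nu_\ell(k,\overline{\pi}) + 1 \ne 2k+1$ together with the smallest part being non-overlined are precisely the conditions making $\phi_s$ well-defined in the boundary case where the last row of the diagram equals the last column in length. Since the removed smallest does not contribute to the last column, that column still has $\nu_\ell + 1$ cells in the rest, so the conjugate of the rest has smallest part of size $2\nu_\ell + 1 = s(\overline{\pi})$; together with the added-back smallest, this gives $\nu_s(\overline{\pi}_s) \ge 2$. A column-counting argument in the rest shows $\nu_\ell(k_s, \overline{\pi}_s) = \frac{s'-1}{2}$, where $s'$ is the second-smallest part of $\overline{\pi}$, so $s(\overline{\pi}_s) = s(\overline{\pi}) < s' = 2\nu_\ell(k_s, \overline{\pi}_s) + 1$, placing the image in Case 2. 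Symmetrically, for Case 2 mapping back, the hypothesis $s(\overline{\pi}) < 2\nu_\ell + 1$ makes $\phi_s$ well-defined, and removing the last (automatically non-overlined) copy of the smallest leaves the rest whose last column, of length $\nu_\ell + 1$, is still longer than its last row, so the conjugate of the rest has smallest part $2\nu_\ell + 1 > s(\overline{\pi})$. Thus $\nu_s(\overline{\pi}_s) = 1$ and the unique smallest is not overlined; another column count gives $\nu_\ell(k_s, \overline{\pi}_s) = \frac{s(\overline{\pi})-1}{2}$, so $s(\overline{\pi}_s) = 2\nu_\ell(k_s,\overline{\pi}_s) + 1$. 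The nondegeneracy $s(\overline{\pi}_s) \ne 2k_s + 1$ reduces to $s(\overline{\pi}) \ne 2\nu(\overline{\pi}) - 1$, which follows from $\nu(\overline{\pi}) \ge \nu_s(\overline{\pi}) + \nu_\ell(k,\overline{\pi})$ combined with $\nu_s(\overline{\pi}) > 1$ and $s(\overline{\pi}) < 2\nu_\ell(k,\overline{\pi}) + 1$.

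The main obstacle is tracking overlines through $\phi_s$ and confirming the image remains a valid element of $\mathcal{P}_n$. The most delicate point is that in Case 2 the first occurrence of $s(\overline{\pi})$ may itself be overlined; the conjugation rule then transports this overline onto a specific part of the conjugate of the rest, which a short calculation shows has size $2(\nu(\overline{\pi}) - \nu_s(\overline{\pi})) + 1$. One must verify that this size is at most $2k_s - 1$, so that no overline lands on a forbidden part of size $2k_s + 1$; this bound is exactly where the condition $\nu_s(\overline{\pi}) \ge 2$ from Case 2 is used. One must also check that in the image the ``first-appearance-only'' overlining rule is preserved for the multiple parts of size $s(\overline{\pi})$ (in the Case 1 image) or is trivially satisfied by $\nu_s(\overline{\pi}_s) = 1$ (in the Case 2 image), which follows from the conjugation rule being self-inverse together with the fact that the added-back smallest always occupies the very last row of the new diagram.
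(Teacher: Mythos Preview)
Your proof is correct and uses exactly the same map as the paper, namely $\phi_s$; the paper's argument is simply a terser version that invokes the involutive property of $\phi_s$ and checks the defining inequalities of Cases~1 and~2 in each direction. Your treatment is in fact more thorough on the overline bookkeeping (the paper dispatches this in one sentence by noting that the last occurrence of the repeated smallest part in Case~2 is never overlined), and the minor slip that $\nu_\ell(k_s,\overline{\pi}_s)$ can exceed $\tfrac{s'-1}{2}$ when the re-attached smallest itself attains size $2k_s+1$ does not affect the inequality you need.
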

\begin{proof}
Let $(k,\overline{\pi})$ be a pair in Case 1 where the smallest part of $\overline{\pi}$ not overlined. Since the smallest part of $\overline{\pi}$ is not overlined, $\phi_s$ is well-defined. Thus, let $(k_s,\overline{\pi}_s)=\phi_s(k,\overline{\pi})$. Since $\nu_s(\overline{\pi})=1$, $s(\overline{\pi}_s)<2\nu_\ell(k_s,\overline{\pi}_s)+1$.  Moreover, because $s(\overline{\pi})=2\nu_\ell(k,\overline{\pi})+1$, $\nu_s(\overline{\pi}_s)>1$. Therefore, $(k_s,\overline{\pi}_s)$ is in Case 2.    

Since $\phi_s$ is its own inverse, we can take a pair $(k_2,\overline{\pi}_2)$ in Case 2 and apply $\phi_s$ to find a pair in Case 1. Because $\overline{\pi}_2$ has more than one appearance of the smallest part, the last part will not be overlined, so the smallest part of $\phi_s(k_2,\overline{\pi}_2)$ will not be overlined.   
\end{proof}

\begin{lemma}
There is a sign-reversing bijection between the pairs in Case 1, where the smallest part is overlined, and the pairs in Case 3. 
\end{lemma}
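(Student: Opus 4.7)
The plan is to construct a direct sign-reversing bijection $\phi$ between the pairs in Case 1 whose smallest part is overlined and the pairs in Case 3. Let $(k,\overline{\pi})$ be a Case 1 pair with its smallest part overlined, so that $\overline{\pi}$ contains a unique overlined copy of $s(\overline{\pi}) = 2\nu_\ell(k,\overline{\pi})+1$. I would define $\phi(k,\overline{\pi}) = (k+1,\overline{\pi}')$, where $\overline{\pi}'$ is obtained from $\overline{\pi}$ by deleting this overlined smallest part and then replacing every part of size $2k+1$ by a part of size $2k+3$. On the boxed $2$-modular diagram this operation erases the bottom row and appends a new rightmost column consisting of a $1$ in the header row and a $2$ in each of the rows previously of maximum width. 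The total $k+|\overline{\pi}|$ is preserved because $-(2\nu_\ell+1)+2\nu_\ell+1=0$, and $\phi$ is sign-reversing because $\nu(\overline{\pi}')=\nu(\overline{\pi})-1$.

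I would then verify directly that $\phi(k,\overline{\pi})$ satisfies every condition defining Case 3. The parity $k+\nu$ is unchanged, so the new pair still has $k'+\nu(\overline{\pi}')\equiv 0\pmod 2$. Setting $k'=k+1$, the value $\nu_\ell(k',\overline{\pi}')$ equals the old $\nu_\ell(k,\overline{\pi})$, and the new smallest part is either the second-smallest part of $\overline{\pi}$, which exceeds $s=2\nu_\ell+1$, or equals $2k+3$ if only the promoted parts remain, so $s(\overline{\pi}')>2\nu_\ell(k',\overline{\pi}')+1$. No part of $\overline{\pi}'$ has size $2k'-1=2k+1$, since every such part has been promoted and every other original part is at most $2k-1$. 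The overlining constraint survives because the promoted parts were maximum-size in $\overline{\pi}$ and hence could not have been overlined.

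For the inverse, starting from $(k,\overline{\pi})$ in Case 3, I would demote each part of size $2k+1$ to a part of size $2k-1$, decrement $k$ to $k-1$, and insert a new overlined part of size $2\nu_\ell(k,\overline{\pi})+1$. The Case 3 hypothesis that $\overline{\pi}$ contains no part of size $2k-1$ guarantees the demoted parts become precisely the maximum-size parts of the new pair, so $\nu_\ell$ is preserved and the inserted overlined part has the correct size $2\nu_\ell(\text{new})+1$ to make the new pair satisfy the defining condition of Case 1. The condition $s(\overline{\pi})>2\nu_\ell+1$ together with $2\nu_\ell+1<2k-1$ ensures the inserted part is the unique new smallest, appearing exactly once.

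The main obstacle is to establish that $\nu_\ell(k,\overline{\pi})\leq k-2$ for every pair in Case 3, since this inequality is needed both to legally overline the inserted part (requiring $2\nu_\ell+1\leq 2(k-1)-1$) and to guarantee the new smallest part differs from $2k'+1$ as demanded by Case 1. I would rule out $\nu_\ell=k$ directly (it would force $s=2\nu_\ell+1$, contradicting Case 3) and handle $\nu_\ell=k-1$ by observing that the Case 3 inequality $s>2\nu_\ell+1=2k-1$ together with the bound $s\leq 2k+1$ would force every part of $\overline{\pi}$ to equal $2k+1$, giving $\nu=\nu_\ell=k-1$ and hence $k+\nu=2k-1$ odd, contradicting the parity condition on Case 3 pairs.
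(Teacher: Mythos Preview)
Your proposal is correct, and in fact the bijection you write down is \emph{literally the same map} as the paper's, just presented differently. The paper defines the bijection as the composite ``remove the overline on the smallest part, apply $\phi_s$, then conjugate,'' invoking the preceding lemma (Case~1 with smallest part not overlined $\leftrightarrow$ Case~2 via $\phi_s$) together with the observation that ordinary conjugation is a sign-preserving bijection between Case~2 and Case~3. If you track what $\mathrm{conj}\circ\phi_s$ does to the boxed $2$-modular diagram, every cell in rows $0,\dots,\nu-1$ is fixed while the bottom row is moved to a new rightmost column---precisely your ``delete the smallest part, promote each $2k+1$ to $2k+3$, increment $k$.'' The overline bookkeeping also matches: the overline on the deleted smallest part is discarded, and overlines on the remaining parts are carried along unchanged.

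Where the two write-ups differ is in economy versus self-containment. The paper's three-line argument outsources all the checking to earlier lemmas. Your route is longer because you verify the Case~3 conditions from scratch and must establish the inequality $\nu_\ell\le k-2$ by hand (your parity argument for ruling out $\nu_\ell=k-1$ is the key step the paper's factored approach hides). Both are valid; yours makes the map and its inverse completely explicit, which is a genuine expository gain even if the underlying bijection is identical. One small remark: your case analysis for the image should also cover the possibility that $\overline\pi'$ is empty (arising from $(k,(\overline 1))$ with $k$ odd); this lands in Case~3 under the natural convention that the empty partition vacuously satisfies ``$s>2\nu_\ell+1$'' and ``no part of size $2k-1$.''
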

\begin{proof}
Note that conjugation is a sign-preserving bijection between the pairs in Case 2 and the pairs in Case 3. Thus, we can remove the overline on the smallest part of $\overline{\pi}$, apply $\phi_s$, and take the conjugate to obtain a sign-reversing bijection between the pairs in Case 1, where the smallest part is overlined, and the pairs in Case 3. 
\end{proof}

Now, the only pairs left are those in Case 4.  These occur exactly when $n=k+|\overline{\pi}|=2k(k+1)$, proving Theorem \ref{interpretation}.

\section{Generalizations}
First, we note that all of our maps preserve the number of boxes containing a 1 in our diagrams.  Furthermore, this number of 1s is exactly $k+\nu(\overline{\pi})$.  Thus, if we let $z$ count the number of 1s in the diagram, we obtain the generalization. 
\begin{theorem}\label{generalization}
$$\sum_{n=0}^\infty \frac{(zq;q^2)_n z^nq^n}{(-zq;q^2)_{n+1}}=\sum_{n=0}^\infty (-1)^nz^{2n}q^{2n(n+1)}.$$
\end{theorem}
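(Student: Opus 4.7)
The plan is to revisit the bijective proof of Theorem \ref{interpretation} and track the additional statistic that counts the number of boxes labelled $1$ in the boxed $2$-modular diagram of a pair $(k,\overline{\pi})$. As noted in the paragraph preceding Theorem \ref{generalization}, this statistic equals $k+\nu(\overline{\pi})$: the top row of the diagram contributes $k$ ones, while the leftmost column (below the top row) contributes one $1$ per part of $\overline{\pi}$.

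First I would verify that the left side of Theorem \ref{generalization} is the generating function
\[
\sum_{n\ge 0}\;q^n\sum_{(k,\overline{\pi})\in\mathcal{P}_n}(-1)^{\nu(\overline{\pi})}\,z^{k+\nu(\overline{\pi})}.
\]
This mirrors the proof of Theorem \ref{equivalence}: $(zq;q^2)_n$ generates overlined odd parts of size at most $2n-1$ with weight $(-z)^{\nu}$; $1/(-zq;q^2)_{n+1}$ generates non-overlined odd parts of size at most $2n+1$, again with weight $(-z)^{\nu}$; and the external factor $z^n q^n$ corresponds to setting $k=n$, where $z^n$ records the $n$ ones in the top row and $q^n$ records the integer $k$.

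Next I would confirm that every sign-reversing map used in Section 4 preserves the statistic $k+\nu(\overline{\pi})$. Conjugation is immediate, since rows and columns are swapped while box contents are fixed, so the top row of $1$s and the leftmost column of $1$s are interchanged. The map $\phi_s$ fixes the last row (along with the single $1$ beginning it) and conjugates the remainder of the diagram, hence also preserves the $1$-count; and $\phi_r=\mathrm{conj}\circ\phi_s\circ\mathrm{conj}$ inherits the property. The bijections between Case 1 and Cases 2 and 3 (Lemma \ref{cases13} and its successor) are compositions of conjugation, $\phi_s$, and the cosmetic operation of toggling an overline on a specified part; since overlines are recorded by shading a box rather than by changing its content, none of these steps alters the number of $1$s.

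Finally I would read off the contribution of the only surviving case, Case 4: pairs with $s(\overline{\pi})=2\nu_\ell(k,\overline{\pi})+1=2k+1$. Such a pair consists of exactly $k$ copies of the part $2k+1$, giving $|\overline{\pi}|=k(2k+1)$, $n=k+|\overline{\pi}|=2k(k+1)$, $\nu(\overline{\pi})=k$, a total of $k+\nu(\overline{\pi})=2k$ ones, and sign $(-1)^k$. Summing over $k\ge 0$ yields $\sum_{k\ge 0}(-1)^k z^{2k} q^{2k(k+1)}$, matching the right side. The main obstacle is the bookkeeping step of checking that each step of Section 4 is faithful on the $1$-statistic; but this is routine once one observes that each step is built from conjugation of some subdiagram (which sends $1$s to $1$s and $2$s to $2$s) together with overline toggles.
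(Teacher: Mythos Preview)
Your proposal is correct and is precisely the paper's own argument: the paper simply observes that every map in Section~4 preserves the number of boxes labelled $1$, that this number equals $k+\nu(\overline{\pi})$, and that letting $z$ track this statistic refines Theorem~\ref{interpretation} to Theorem~\ref{generalization}. You have spelled out the generating-function interpretation and the Case~4 count in more detail than the paper does, but the route is identical.
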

Theorem \ref{fq4} is the case $z=1$ of this generalization. Additionally, we can generalize boxed 2-modular diagrams as boxed $m$-modular diagrams by replacing the 2s in the diagram with $m$'s and all 1s with $r$'s to allow parts of size $r\pmod m$ for some fixed $0\le r<m$.  Then, we obtain the following generalization. 

\begin{theorem}\label{rmodm}
$$\sum_{n=0}^\infty \frac{(zq^r;q^m)_n z^nq^{rn}}{(-zq^r;q^m)_{n+1}}=\sum_{n=0}^\infty (-1)^nz^{2n}q^{n(mn+2r)},$$
\end{theorem}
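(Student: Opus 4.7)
The plan is to adapt the bijective framework of Section 4 to an $m$-modular setting. First I would define a boxed $m$-modular diagram for a pair $(k,\overline{\pi})$, where $\overline{\pi}$ is an overpartition into parts congruent to $r$ modulo $m$: the row corresponding to a part $mj+r$ consists of one cell containing $r$ followed by $j$ cells each containing $m$, and the top row is one $0$ followed by $k$ cells each containing $r$. Overlined parts are shaded in the last cell of the corresponding row, exactly as before. Let $\mathcal{P}_n^{(m,r)}$ denote the analogue of $\mathcal{P}_n$: pairs $(k,\overline{\pi})$ where $\overline{\pi}$ is an overpartition of $n-rk$ into parts $\equiv r\pmod m$ of size $\le mk+r$, with all overlined parts of size $\le m(k-1)+r$.

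I would next state and prove the $(m,r,z)$-analogue of Theorem \ref{interpretation}: weighting each pair $(k,\overline{\pi})\in\mathcal{P}_n^{(m,r)}$ by $(-1)^{\nu(\overline{\pi})} z^{k+\nu(\overline{\pi})}$, the total weighted count is $(-1)^k z^{2k}$ when $n=k(mk+2r)$ for some $k\ge 0$, and is $0$ otherwise. The equivalence with Theorem \ref{rmodm} follows by the same generating-function bookkeeping as in the proof of Theorem \ref{equivalence}: $(zq^r;q^m)_n$ generates distinct overlined parts of size $\le m(n-1)+r$, $1/(-zq^r;q^m)_{n+1}$ generates non-overlined parts of size $\le mn+r$, each contributing a factor of $-z$ per part, while $z^n q^{rn}$ corresponds to the top row of the diagram.

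The bulk of the proof is then a direct transcription of Section 4. Conjugation (reflection about $y=-x$) preserves every cell content and in particular the number of $r$'s, which equals $k+\nu(\overline{\pi})$, so it preserves the weight in $z$ and $q$ while swapping $k$ with $\nu(\overline{\pi})$; this yields the analogue of Lemma \ref{conj} and eliminates all pairs with $k+\nu(\overline{\pi})\equiv 1\pmod 2$. The maps $\phi_s$ and $\phi_r$ are defined by the same recipe of fixing the last row or column and conjugating the rest, and their well-definedness is governed by comparing $\tfrac{s(\overline{\pi})-r}{m}$ with $\nu_\ell(k,\overline{\pi})$ in place of $\tfrac{s(\overline{\pi})-1}{2}$ with $\nu_\ell(k,\overline{\pi})$. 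The analogues of Lemmas \ref{phis}, \ref{phis2}, \ref{phir}, \ref{cases13}, and the lemma immediately following it, then reduce every pair to one of four cases, and all cases cancel except the analogue of Case 4, where $s(\overline{\pi}) = m\nu_\ell(k,\overline{\pi})+r = mk+r$. There $\overline{\pi}$ consists of $k$ copies of $mk+r$, so $n = rk + k(mk+r) = k(mk+2r)$, $\nu(\overline{\pi})=k$, and the weight is $(-1)^k z^{2k}$, matching the right-hand side term by term.

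The main obstacle I expect is not conceptual---each map, lemma, and sign-reversing argument has a direct analogue---but notational: every occurrence of $2k+1$, $2\nu_\ell(k,\overline{\pi})+1$, and $2k-1$ in the original proof must be replaced by $mk+r$, $m\nu_\ell(k,\overline{\pi})+r$, and $m(k-1)+r$ respectively, and the auxiliary conditions (\emph{last part unshaded}, \emph{$\overline{\pi}$ has a part of size $m(k-1)+r$}, and the split between $\nu_s=1$ and $\nu_s>1$) must be verified to continue carving out disjoint, exhaustive subsets of $\mathcal{P}_n^{(m,r)}$ with the proper sign-reversing properties. In particular, for the analogue of Lemma \ref{phir} one needs to check that the hypothesis that $\overline{\pi}$ contains a part of size $m(k-1)+r$ still guarantees, after conjugation, that the smallest part appears exactly once, so that the analogue of Lemma \ref{phis} applies.
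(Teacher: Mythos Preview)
Your proposal is correct and follows exactly the approach the paper indicates: the paper's own justification of Theorem \ref{rmodm} is simply the observation that one may replace the $2$'s in the boxed $2$-modular diagram by $m$'s and the $1$'s by $r$'s, after which the bijections of Section~4 carry over verbatim. You have spelled out this substitution in considerably more detail than the paper does (the paper gives only a one-sentence sketch), but the underlying argument, the maps $\mathrm{conj}$, $\phi_s$, $\phi_r$, the case split, and the identification of the surviving fixed points with $n=k(mk+2r)$ are all identical to the paper's intended proof.
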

Theorem \ref{rmodm} yields Theorem \ref{fq4} when $z=1$, $m=2$, and $r=1$.

\section{Further work}
This work allows us to obtain a similar combinatorial interpretation for another one of Ramanujan's identities.  
\begin{theorem}[Ramanujan]\label{fq3}
If $f(q)$ is defined by (\ref{fq}), then for $|q|<1$
$$ \sum_{k=0}^\infty \frac{q^n(q;q^2)_{n}}{(-q;q)_{2n+1}}=f(q^3)$$
\end{theorem}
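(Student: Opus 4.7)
\emph{Plan.} I would follow the template of Sections~3 and~4: first translate Theorem~\ref{fq3} into a signed enumeration statement about pairs $(k,\overline{\pi})$ in the spirit of Theorem~\ref{interpretation}, and then construct sign-reversing involutions on a generalized class of boxed $2$-modular diagrams that cancel all pairs except one explicit configuration for each $k\ge 0$.

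\emph{Step 1 (combinatorial interpretation).} Let $\mathcal{Q}_n$ denote the set of pairs $(k,\overline{\pi})$ where $k\ge 0$ and $\overline{\pi}$ is an overpartition of $n-k$ into (not necessarily odd) parts of size at most $2k+1$, with the rule that only odd parts of size at most $2k-1$ may be overlined; weight each pair by $(-1)^{\nu(\overline{\pi})}$. A $q$-series manipulation modeled on the proof of Theorem~\ref{equivalence} -- using that $(q;q^2)_k$ generates distinct odd parts of size $\le 2k-1$ with signed weight $(-1)^\nu$, that $q^k$ contributes the integer $k$, and that $1/(-q;q)_{2k+1}$ generates partitions into arbitrary parts of size $\le 2k+1$ with signed weight $(-1)^\nu$ -- shows that the LHS of Theorem~\ref{fq3} equals $\sum_{n\ge 0}\bigl(\overline{q}_e(n)-\overline{q}_o(n)\bigr)q^n$, where $\overline{q}_e(n)$ and $\overline{q}_o(n)$ count pairs in $\mathcal{Q}_n$ with even and odd $\nu(\overline{\pi})$ respectively. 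It thus suffices to prove
\[
\overline{q}_e(n)-\overline{q}_o(n)=\begin{cases}(-1)^k,&n=3k(k+1)/2,\\ 0,&\text{otherwise.}\end{cases}
\]

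\emph{Step 2 (identify the stuck family).} I conjecture that the surviving pair for each $k\ge 0$ is $(k,\overline{\pi}_k)$ with
\[
\overline{\pi}_k=(2k,\,2k-1,\,2k-2,\,\ldots,\,k+1),
\]
where every odd part of $\overline{\pi}_k$ is overlined. Direct computation gives $|\overline{\pi}_k|=\sum_{j=k+1}^{2k}j=k(3k+1)/2$, so $k+|\overline{\pi}_k|=3k(k+1)/2$ and $(-1)^{\nu(\overline{\pi}_k)}=(-1)^k$, matching the desired RHS. Moreover every odd part appearing in $\overline{\pi}_k$ lies in $[k+1,2k]$ and is therefore at most $2k-1$, so the prescribed overlines are permissible and $\overline{\pi}_k\in\mathcal{Q}_{3k(k+1)/2}$.

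\emph{Step 3 (sign-reversing involutions).} I would represent $(k,\overline{\pi})\in\mathcal{Q}_n$ by a generalized boxed $2$-modular diagram in which an even part $2j$ corresponds to a row of $j$ boxes containing $2$ (with no leading $1$), while odd-part rows and the header row retain their previous form. Ordering the rows so that odd parts sit immediately below the header and even parts below them, the ``odd sub-diagram'' (header together with odd-part rows) has the shape of a Young diagram and admits the conjugation of Section~4.1, with the even-part sub-diagram carried along unchanged. I then mimic the case split of Section~4: an analogue of Lemma~\ref{conj} handles $k+\nu(\overline{\pi})\equiv 1\pmod 2$ via conjugation of the odd sub-diagram, and analogues of Lemmas~\ref{phis}--\ref{phir} handle $k+\nu(\overline{\pi})\equiv 0\pmod 2$ after further subdividing according to how the largest even part (if any) interacts with the smallest odd part. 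After all cancellations, only the staircase pairs $(k,\overline{\pi}_k)$ should remain.

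\emph{Main expected obstacle.} The principal difficulty is that the presence of even parts destroys the Young-diagram shape, so the conjugation-based involutions of Section~4 cannot be inherited verbatim. The statistics $s(\overline{\pi})$ and $\nu_\ell(k,\overline{\pi})$ that drove the four-case split for Theorem~\ref{interpretation} become more subtle once parts of both parities are allowed, and the interaction between overlines (which may sit only on odd parts of size at most $2k-1$) and the adjacent even parts must be tracked carefully. Arranging the involutions to be globally compatible so that precisely the staircases $(k,\overline{\pi}_k)$ survive -- and verifying directly that the residual weighted sum reproduces $f(q^3)=\sum_n(-1)^nq^{3n(n+1)/2}$ -- will be the most delicate step, and it may ultimately require a unified ``extended conjugation'' that simultaneously transposes odd and even rows rather than the odd/even stratification sketched above.
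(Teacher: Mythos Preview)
Your proposal cannot be compared against the paper's proof because the paper does not prove Theorem~\ref{fq3}. Section~6 only states the identity, records the combinatorial reformulation (your Step~1 is exactly the paper's Theorem~\ref{fq3interp}, with your $\mathcal{Q}_n$ equal to the paper's $\mathcal{P}'_n$), and then explicitly says that ``a bijective proof of Theorem~\ref{fq3interp} appears to be more difficult than the proof of Theorem~\ref{interpretation} and would be a welcome contribution.'' Your Step~2 also matches the paper's speculation: the paper conjectures that the surviving pairs are $(k,(2k,2k-1,\ldots,k+1))$, and you have added the further guess that all odd parts are overlined so as to pin down a single overpartition per $k$.

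The genuine gap is therefore exactly where you place it, in Step~3, and the sketch you give does not yet work. If you conjugate only the ``odd sub-diagram'' and carry the even parts along unchanged, conjugation replaces $k$ by the number of \emph{odd} parts of $\overline{\pi}$, say $k'$. But the even parts were only constrained to be at most $2k+1$; after the move they must be at most $2k'+1$, and there is no reason this holds, so the image need not lie in $\mathcal{Q}_n$. For the same reason the map is not sign-reversing in the needed sense: the weight is $(-1)^{\nu(\overline{\pi})}$ with $\nu$ the \emph{total} number of parts, whereas your odd-subdiagram conjugation swaps $k$ with the number of odd parts, so $\nu(\overline{\pi})$ changes by $k-\nu_{\mathrm{odd}}(\overline{\pi})$ rather than by something guaranteed odd when $k+\nu(\overline{\pi})$ is odd. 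Any workable involution must move even parts as well, and the statistics governing the case split will have to mix parities --- this is precisely the obstacle the paper flags and leaves open.
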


We can interpret Theorem \ref{fq3} in terms of pairs $(k,\overline{\pi})\in \mathcal{P}'_n$ where $\mathcal{P}'_n$ contains all pairs $(k,\overline{\pi})$ where $k\in \mathbb{Z}_{\ge 0}$, $\overline{\pi}$ is an overpartition into parts of size $\le 2k+1$ where all overlined parts are odd and of size $\le 2k-1$, and $k+|\overline{\pi}|=n$. We count each pair with weight $(-1)^{\nu(\overline{\pi})}$.

\begin{theorem}\label{fq3interp}
Let $\overline{p}'_0(n)$ (resp. $\overline{p}'_e(n)$) be the number of pairs $(k,\overline{\pi})\in \mathcal{P}'_n$ where $\overline{\pi}$ has an odd number (resp. even number) of parts. Then, for $n\ge 0$, $$\overline{p}'_e(n)-\overline{p}'_o(n)=\begin{cases} (-1)^k & \text{if } n=\frac{3k(k+1)}{2}\\
0 & \text{otherwise.} 
\end{cases}$$
\end{theorem}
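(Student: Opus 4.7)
The plan is to prove Theorem \ref{fq3interp} by reducing it to Ramanujan's identity (Theorem \ref{fq3}), in exact parallel with the proof of Theorem \ref{equivalence}. The idea is to read off each $q$-product on the left-hand side of Theorem \ref{fq3} as the generating function for one component of the data $(k,\overline{\pi}) \in \mathcal{P}'_n$ counted with weight $(-1)^{\nu(\overline{\pi})}$.

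First, I would identify $q^n$ as generating the integer $k = n$ that records the index of the pair. Next, $(q;q^2)_n = \prod_{j=0}^{n-1}(1-q^{2j+1})$ generates partitions into distinct odd parts of size at most $2n-1$, with each partition into $\nu$ parts signed by $(-1)^\nu$; these become the overlined parts of $\overline{\pi}$, which by the definition of $\mathcal{P}'_n$ are required to be odd and of size at most $2k-1$. Finally, I would rewrite $(-q;q)_{2n+1}^{-1} = \prod_{j=1}^{2n+1}(1+q^j)^{-1}$ and expand each factor as a geometric series in $-q^j$, showing that it generates partitions into parts of size at most $2n+1$, with no parity or distinctness constraint, weighted by $(-1)$ raised to the total number of parts. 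These supply the non-overlined parts of $\overline{\pi}$.

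Multiplying the three generating functions and summing over $n$ produces
$$\sum_{n=0}^\infty \frac{q^n (q;q^2)_n}{(-q;q)_{2n+1}} \;=\; \sum_{n=0}^\infty \bigl(\overline{p}'_e(n) - \overline{p}'_o(n)\bigr)\, q^n,$$
while on the right side $f(q^3) = \sum_{k=0}^\infty (-1)^k q^{3k(k+1)/2}$ already has the coefficients prescribed by the piecewise formula. Comparing coefficients via Theorem \ref{fq3} finishes the proof.

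The main point requiring care is the bookkeeping of overlines: one must confirm that splitting an overpartition in $\mathcal{P}'_n$ into its overlined and non-overlined parts produces precisely an ordered pair consisting of a distinct-odd-part partition (bounded by $2k-1$) and an unrestricted partition (bounded by $2k+1$), with no double counting and with the signs $(-1)^{\nu_{\text{ol}}}$ and $(-1)^{\nu_{\text{non-ol}}}$ multiplying to $(-1)^{\nu(\overline{\pi})}$. An alternative, more ambitious, route would be to imitate the bijective argument of Section 4 by introducing an analogue of the boxed $2$-modular diagram adapted to pairs in $\mathcal{P}'_n$ and developing sign-reversing involutions whose fixed points realise the case $n = 3k(k+1)/2$; the main obstacle there is that the non-overlined parts need no longer be odd, so the diagrams would have a mixed $1$-modular/$2$-modular character and the case analysis would be genuinely more delicate than in Section 4.
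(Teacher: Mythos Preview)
Your proposal is correct and matches what the paper does. In fact, the paper does not write out any proof of Theorem~\ref{fq3interp}: it simply asserts the combinatorial interpretation of Theorem~\ref{fq3} via the set $\mathcal{P}'_n$, leaving the generating-function verification implicit by analogy with Theorem~\ref{equivalence}, and then remarks that a \emph{bijective} proof ``appears to be more difficult \ldots\ and would be a welcome contribution.'' Your argument spells out precisely that implicit equivalence step---reading $q^n$, $(q;q^2)_n$, and $1/(-q;q)_{2n+1}$ as generating, respectively, the index $k$, the overlined (distinct, odd, $\le 2k-1$) parts, and the non-overlined ($\le 2k+1$) parts, each weighted by $(-1)$ per part---and then invokes Theorem~\ref{fq3}. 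The bookkeeping concern you flag (that the split into overlined and non-overlined parts is a genuine bijection with multiplicative signs) is routine and handled exactly as in the proof of Theorem~\ref{equivalence}. Your closing remark about the obstacle to a bijective proof---the mixed-parity non-overlined parts breaking the clean $2$-modular picture---is also in line with the paper's own comment.
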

Due to the presence of even parts in the partition, a bijective proof of Theorem \ref{fq3interp} appears to be more difficult than the proof of Theorem \ref{interpretation} and would be a welcome contribution.  We suspect that the involution necessary for a bijective proof of Theorem \ref{fq3interp} will fix pairs $(k, (2k+2k-1+\ldots+(k+1)) )$.

\section*{Acknowledgement}
The author thanks Bruce Berndt for suggesting this project.  Additionally, thanks to Frank Garvan for suggesting Theorem \ref{generalization} and Dennis Eichhorn for his many helpful comments.
\bibliographystyle{hunsrt}

\bibliography{Bib}

\end{document}